\DeclareMathOperator{\aut}{Aut}
\DeclareMathOperator{\gl}{GL}
\newcommand{\alt}{\mathrm{Alt}}
\newtheorem{thm}{Theorem}
 \newtheorem{lemma}[thm]{Lemma}
\newtheorem{prop}[thm]{Proposition}
\numberwithin{equation}{section}
\renewcommand{\footnote}{\endnote}
\newcommand{\ignore}[1]{}\makeglossary
\begin{document}
	\bibliographystyle{amsplain}
	\subjclass[2020]{ 20D30, 20D10}
	\keywords{IM-groups, cyclic subgroups, maximal subgroups, supersoluble groups}
\title[CIM-groups]{Finite groups in which every cyclic subgroup is the intersection of maximal subgroups}

\author{Andrea Lucchini}
\address{Andrea Lucchini\\ Universit\`a di Padova\\  Dipartimento di Matematica \lq\lq Tullio Levi-Civita\rq\rq\\ Via Trieste 63, 35121 Padova, Italy\\email: lucchini@math.unipd.it}

\begin{abstract}
We determine the structure of the finite groups with the property that  every cyclic subgroup is the intersection of maximal subgroups, comparing this property  with the one where all proper subgroups are intersections of maximal subgroups.
\end{abstract}

\maketitle

\section{Introduction}

A group $G$ is called an IM-group if every proper subgroup $1\leq H < G$  is the intersection of maximal subgroups of $G.$ An overview of the properties of IM-groups is presented in \cite[Section 3.3]{sh}. In 1970 Menegazzo gave a full characterization of the soluble IM-groups \cite{fm}.
Later is was conjectured that a finite IM-group must be soluble. Since the property of being an IM-group is inherited by quotients, the problem of the
solubility of finite IM-groups was first reduced to the simple case in \cite[Theorem 1.3]{bt}, where it was shown that a non-trivial normal subgroup of a finite IM-group is an IM-group. Then, using the classification of finite simple groups, it was proved in \cite{dmt} that every finite IM-group is soluble.
All these results together leads to the following conclusion: {\sl{a finite group $G$ is an IM-group if and only if $G$ contains a normal elementary abelian 
		Hall subgroup $N$ with elementary abelian factor such that every subgroup of $N$ is normal in $G$}} \cite[Theorem 3.3.10]{sh} (here we say that a finite abelian group is elementary abelian if its exponent is square-free).

Our aim is to study finite groups in which only subgroups with certain prescribed properties can be obtained as the intersection of maximal subgroups. In particular, we are interested in finite groups where every cyclic subgroup is the intersection of maximal subgroups. We will call such groups CIM-groups. Again, using the classification of finite simple groups, it can be proved that finite CIM-groups are soluble. More precisely, the following result holds:

\begin{thm}\label{supersu} A finite CIM-group $G$ is supersoluble and metabelian. More precisely there exists a finite abelian group $H$, a family of irreducible $H$-modules $V_1,\dots, V_r$ of prime order and positive integers $\delta_1,\dots,\delta_r$ such that
	$G\cong (V_1^{\delta_1} \times \dots \times {V_r^{\delta_r})}\rtimes H.$
	Moreover $|V_1|,\dots,|V_t|$ are pairwise different and none of them divides $|H|.$
\end{thm}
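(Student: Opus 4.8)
The plan is to reduce to the soluble case and then read off the structure by repeatedly exploiting one elementary observation: if $A$ is a minimal normal subgroup and $M$ a maximal subgroup with $A\not\le M$, then $AM=G$ and $A\cap M$ is normalized both by $A$ (abelian, since $G$ is soluble) and by $M$, hence by $G$, so $A\cap M=1$ by minimality. Thus a maximal subgroup either contains $A$ or meets it trivially. Solubility itself I would obtain exactly as in the IM-case: a minimal non-soluble example would have a non-abelian simple section, and the classification rules this out; so from now on $G$ is soluble. Since $1$ is cyclic, the hypothesis applied to it gives $\frat(G)=\bigcap_{M\text{ max}}M=1$, whence $\F(G)=\soc(G)$ is a complemented product of minimal normal subgroups and $\C_G(\F(G))=\F(G)$.

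Next I would show that every minimal normal subgroup $A$ has prime order. Taking $1\ne v\in A$, the observation above says the maximal subgroups containing $v$ are exactly those containing $A$, so the intersection of all maximal subgroups containing $\langle v\rangle$ contains $A$; as $G$ is a CIM-group this intersection is $\langle v\rangle$, forcing $A=\langle v\rangle$ of prime order. Hence $\soc(G)$ is a direct sum of one-dimensional $G$-modules; choosing a basis adapted to such a decomposition shows $G/\C_G(\soc(G))$ is abelian, and since $\C_G(\soc(G))=\soc(G)$ we conclude $G/\soc(G)$ is abelian. Therefore $G'\le\soc(G)$ is abelian (so $G$ is metabelian), every chief factor below $\soc(G)$ is cyclic, and every chief factor above it is central hence cyclic; thus $G$ is supersoluble.

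To obtain the finer structure I would first prove that each $p$-component $\soc(G)_p$ is a homogeneous $G$-module. If two non-isomorphic characters $\chi\ne\psi$ occurred I would pick $0\ne a$ in the $\chi$-component and $0\ne b$ in the $\psi$-component and look at $v=ab$: for a maximal $M\ni v$ either $\soc(G)\le M$ (so $a\in M$), or $M\cap\soc(G)=\ker\lambda$ for a $G$-surjection $\lambda$ onto a simple module, and then $\lambda(a)+\lambda(b)=0$ with $\lambda$ killing at least one of $a,b$ forces $\lambda(a)=0$, again giving $a\in M$. Since $a\notin\langle v\rangle$, this contradicts the CIM-property, so each $\soc(G)_p$ is isotypic; this is exactly what yields pairwise distinct orders $|V_i|$. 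Writing $N=[\soc(G),G]$ for the product of the non-central homogeneous components gives $N=V_1^{\delta_1}\x\cdots\x V_r^{\delta_r}$ with the $V_i$ of pairwise distinct prime orders.

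It remains to show that $N$ is a normal Hall subgroup with abelian complement, and here the Hall condition is the real obstacle: that a prime $p$ with $\soc(G)_p$ acted on nontrivially cannot divide $|G/\soc(G)|$, equivalently that the Sylow $p$-subgroup equals $\soc(G)_p$. I would argue by contradiction. A $p$-element $z$ outside $\soc(G)$ must centralize the one-dimensional eccentric module $A=\langle a\rangle$ (its action has order prime to $p$), so $v=az$ generates a diagonal $\langle v\rangle$ of order $p$ with $\langle v\rangle\cap A=1$; the crux is to prove that \emph{no} complement to $A$ contains $v$. This I would extract from the cocycle description of the complements to the abelian normal subgroup $A$: the relations forced by the abelianity of $G/\soc(G)$ together with the nontriviality of the action on $A$ pin down the lift of the image of $z$ in every complement to be $z$ itself, never $az$. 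Granting this, every maximal subgroup containing $v$ contains $A$, so $\langle v\rangle$ cannot be separated from $a$, the desired contradiction; establishing this cocycle computation in full generality (allowing for commutators that only vanish modulo $\soc(G)$) is the technical heart of the argument. Once the Hall condition is known, Schur--Zassenhaus supplies a complement $H\cong G/N$ with $\pi(N)\cap\pi(H)=\varnothing$ by construction. Finally $H$ is abelian: one has $H'\le\soc(G)\cap\Z(G)$, so a nonabelian Sylow subgroup $P$ of $H$ would contain $1\ne z\in P'\le\frat(P)\cap\Z(G)$ of prime order; complementing the central minimal normal subgroup $\langle z\rangle$ (possible as $\frat(G)=1$) gives a maximal $M$ with $z\notin M$, whence $P\cap M$ is maximal in $P$ yet avoids $z\in\frat(P)$, a contradiction.
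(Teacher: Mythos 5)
Your architecture is the paper's (solubility via the classification, then $\frat(G)=1$, complemented Fitting subgroup, cyclic minimal normal subgroups, homogeneity of the $p$-components, and the Hall condition), and several steps are correct and essentially identical to the paper's arguments; but two of the steps are genuine gaps, and they are precisely the two hardest ones.

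First, solubility. You dispose of it with ``a minimal non-soluble example would have a non-abelian simple section, and the classification rules this out \dots exactly as in the IM-case.'' This is not an argument. The IM-case reduction to simple groups rests on the fact that non-trivial normal subgroups of IM-groups are IM-groups; the analogue for CIM-groups is Proposition \ref{ns}, which the paper can only derive \emph{a posteriori} from Theorems \ref{supersu} and \ref{completo}, so invoking the IM-group strategy here is circular. The paper's Theorem \ref{solub} instead needs a specific CFSG-dependent input, Theorem \ref{almost}: in every non-abelian simple group $S$ there exist non-commuting $s,x$ such that every maximal subgroup of every almost simple group with socle $S$ that contains $s$ also contains $x$. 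One then takes a non-abelian minimal normal subgroup $N\cong S^t$, the element $g=(s,1,\dots,1)$, and runs the O'Nan--Scott analysis of $M\cap N$ for maximal $M\not\geq N$: the diagonal case is excluded because $(s,1,\dots,1)$ lies in no product of full diagonal subgroups, and the product case reduces to a maximal subgroup of the almost simple group $\phi(N_G(S_1))$ containing $s$, hence containing $x$. None of this is in your sketch, and it cannot be waved through.

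Second, the Hall condition $p_i\nmid|H|$. You correctly identify it as the crux and then explicitly leave it unproved (``Granting this \dots establishing this cocycle computation in full generality \dots is the technical heart of the argument''). No cocycle machinery is needed: since $\frat(G)=1$ there is a complement $H$ to $\F(G)$, the maximal subgroups not containing $\F(G)$ have the explicit form $M=WH^u$ with $\F(G)=U\oplus W$, $U$ irreducible, $u\in U$, and if $y\in H$ has order $p_i$ (Cauchy) and $x\in\F(G)$ has order $p_i$, then a maximal $M=WH^u$ containing $xy$ but not $x$ forces $U\cong_H\langle x\rangle$ of order $p_i$; the $p_i$-element $y$ then centralizes $U$, so $[y,u]=0$, $y\in H^u\leq M$, and $x=(xy)y^{-1}\in M$, a contradiction. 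This is the paper's Lemma \ref{coprimeh}. Your worry about ``commutators that only vanish modulo $\soc(G)$'' disappears once you work inside a fixed complement rather than with an arbitrary $p$-element of $G$; you already possess the only nontrivial observation (a $p$-element acts trivially on a module of order $p$) but never connect it to the explicit shape of the maximal subgroups avoiding $A$. The remaining steps --- prime order of the abelian minimal normal subgroups, the isotypy argument via $\lambda(a)+\lambda(b)=0$ (which is Lemma \ref{coprime} in different notation), and the abelianity of $H$ (where the paper's embedding $H\hookrightarrow\prod_i\aut(C_{p_i})$ is more direct than your Sylow--Frattini detour, though both work) --- are sound.
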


A semidirect product with the properties described in the previous statement is not necessarily a CIM-group. To state a precise and complete characterization of CIM-groups, it is necessary to introduce some additional notations. Given $h \in H$,  define $M_H(h)$ as the intersection of the maximal subgroups of $H$ that contain $h$. %if $H \neq \langle h\rangle,$  and set $M_H(h) = H$ otherwise. Moreover denote by $J_G(h)$ the set of indices $j$ such that $h$ centralizes $V_j$ and $\delta_j>1.$

\begin{thm}\label{completo}
	A semidirect product $G = (V_1^{\delta_1} \times \dots \times {V_r^{\delta_r})}\rtimes H$ with the properties described in the statement of Theorem \ref{supersu} is a CIM-group if and only if
	\ $\bigcap_{j\in J_G(h)}C_{M_H(h)}(V_j)=\langle h\rangle$ for every $h\in H$
	with $\langle h\rangle \neq H.$
\end{thm}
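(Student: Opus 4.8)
The plan is to prove both implications simultaneously by computing, for an arbitrary $g\in G$, the subgroup $D_g$ defined as the intersection of all maximal subgroups of $G$ containing $g$, and then checking that the displayed identity is exactly the condition making $D_g=\langle g\rangle$ for every $g$. Throughout, write $V=V_1^{\delta_1}\times\dots\times V_r^{\delta_r}$; this is an abelian normal Hall subgroup, each $V_j$ is a $1$-dimensional $\FF_{p_j}$-module ($p_j=|V_j|$) on which $H$ acts through a character $\chi_j\colon H\to\FF_{p_j}^{\times}$ with kernel $C_H(V_j)$, and every $g$ factors uniquely as $g=vh$ with $v=(v_1,\dots,v_r)\in V$ and $h\in H$.

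First I would classify the maximal subgroups of $G$. Those containing $V$ are the subgroups $V\overline M$ with $\overline M$ maximal in $H$; the remaining ones arise from a maximal $H$-submodule $W<V$ (so $V/W\cong V_j$ for some $j$) together with a complement to $V/W$ in $G/W$. Since $p_j\nmid|H|$ such complements exist and, because $H^1(H,V_j)=0$, form a single $V/W$-orbit. A direct computation in $G/W=(V/W)\rtimes H$ shows that, writing the image of $g$ as $(\overline{v_j},h)$, the complement determined by $\overline u\in V/W$ contains $g$ exactly when $\overline{v_j}=(\chi_j(h)-1)\overline u$. Hence, if $h$ centralizes $V_j$, then $g$ lies in such a maximal subgroup precisely when $v_j\in W$, and then in all of its complements; if $h$ does not centralize $V_j$, then for each admissible $W$ exactly one complement contains $g$.

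Next I would intersect. The subgroups $V\overline M$ containing $g$ are those with $h\in\overline M$, and they meet in $V\cdot M_H(h)$, so an element $g'=v'k$ of $D_g$ satisfies $k\in M_H(h)$. For the second family I treat an index $j$ with $h\in C_H(V_j)$, i.e.\ $j\in J_G(h)$, separately from $j\notin J_G(h)$: lying in every relevant complement for all admissible $W$ forces, when $j\in J_G(h)$, both $\chi_j(k)=1$ (i.e.\ $k\in C_H(V_j)$) and $v'_j\in\langle v_j\rangle$ — the latter because the intersection of all hyperplanes of $V_j^{\delta_j}$ through the line $\langle v_j\rangle$ is that line; while for $j\notin J_G(h)$ it forces $v'_j=(\chi_j(k)-1)(\chi_j(h)-1)^{-1}v_j$ with no further restriction on $k$ (using that the intersection of all hyperplanes of $V_j^{\delta_j}$ is $0$). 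Consequently $k\in\bigcap_{j\in J_G(h)}C_{M_H(h)}(V_j)$, and once $k$ is fixed the coordinates $v'_j$ are determined for $j\notin J_G(h)$ and free to be arbitrary scalar multiples of $v_j$ for $j\in J_G(h)$.

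The necessity of the condition is now immediate: taking $v=1$ gives $D_h=\bigcap_{j\in J_G(h)}C_{M_H(h)}(V_j)$ as a subgroup of $H$, so if $G$ is a CIM-group and $\langle h\rangle\neq H$ this must equal $\langle h\rangle$. For sufficiency, assume the condition and take any $g=vh$. If $\langle h\rangle\neq H$ the condition yields $k\in\langle h\rangle$, while if $\langle h\rangle=H$ this holds trivially; write $k=h^m$. Comparing $g'=v'k$ with the powers $g^n=\big(\sum_{i=0}^{n-1}{}^{h^i}v\big)h^n$, whose $j$-coordinate is $\frac{\chi_j(h)^n-1}{\chi_j(h)-1}v_j$ for $j\notin J_G(h)$ and $n\,v_j$ for $j\in J_G(h)$, the relations for $j\notin J_G(h)$ hold automatically as soon as $n\equiv m\pmod{|h|}$, whereas for $j\in J_G(h)$ with $v_j\neq0$ one needs $n\equiv c_j\pmod{p_j}$, where $v'_j=c_jv_j$. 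As $|h|$ and the primes $p_j$ $(j\in J_G(h))$ are pairwise coprime, the Chinese Remainder Theorem supplies an $n$ with $g'=g^n$, giving $D_g=\langle g\rangle$. The main obstacle is the type-(2) classification together with the bookkeeping of which such maximal subgroups contain $g$ according to whether $h$ centralizes $V_j$; the closing CRT gluing, which shows that the scalar freedom at the indices $j\in J_G(h)$ does not enlarge $D_g$ beyond $\langle g\rangle$, is the delicate endpoint.
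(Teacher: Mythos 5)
Your framework — the two families of maximal subgroups, the coordinate computation with the characters $\chi_j$, and the CRT endgame — is essentially the paper's (it is the content of Lemma \ref{brutto}), but there is a genuine gap at the step where you intersect the type-(2) maximal subgroups attached to an index $j$ with $h\in C_H(V_j)$. You assert that membership of $g'=v'k$ in every relevant complement for all admissible $W$ forces $\chi_j(k)=1$. That constraint is only imposed if at least one admissible $W$ exists, i.e.\ if some hyperplane of $V_j^{\delta_j}$ contains $v_j$. When $\delta_j=1$ and $v_j\neq 0$ there is no such hyperplane, so \emph{no} type-(2) maximal subgroup attached to $j$ contains $g$ at all; the constraint $k\in C_H(V_j)$ disappears and your conclusion $k\in\bigcap_{j\in J_G(h)}C_{M_H(h)}(V_j)$ fails. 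This is not a boundary technicality: it is exactly what separates the paper's example $G_1$ (a CIM-group) from $G_2$ and $G_3$ (not CIM-groups), and it is why Lemma \ref{brutto} works with the set $\tilde J_G(x)$ of indices $i$ such that $h$ centralizes $V_i$ \emph{and} either $p_i$ is coprime to $|y|$ or $\delta_i>1$. Concretely, in $G_3=(V_1\times V_2^2)\rtimes\langle x\rangle$ with $|x|=8$, $C_H(V_1)=\langle x^4\rangle$, $C_H(V_2)=1$, take $g=v_1x^4$ of order $10$: your computation yields $D_g=V_1\langle x^4\rangle=\langle g\rangle$, whereas in fact $D_g=V_1\langle x^2\rangle$ has order $20$, because the $V_1$-type maximal subgroups you invoke do not contain $g$.

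Relatedly, you read $J_G(h)$ as $\{j: h\in C_H(V_j)\}$; the paper's examples (e.g.\ $J_{G_3}(x^4)=\emptyset$ although $x^4$ centralizes $V_1$) show that $J_G(h)$ must exclude the centralized indices with $\delta_j=1$, and with your reading the stated equivalence is simply false ($G_3$ satisfies your condition but is not a CIM-group). The necessity direction suffers symmetrically: taking $v=1$ only yields $\bigcap_{j:\,h\in C_H(V_j)}C_{M_H(h)}(V_j)=\langle h\rangle$, an intersection over a possibly larger index set and hence a weaker conclusion than the theorem's condition. To repair the argument you must (i) restrict the forced constraints to those $j$ for which admissible $W$'s actually exist, recovering the paper's $\tilde J_G(g)$, and (ii) for necessity, test elements $g=vh$ whose $V_j$-component is nonzero precisely at the centralized indices with $\delta_j=1$, which realizes the minimal index set $J_G(h)$. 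The rest of your computation, including the CRT gluing that replaces the paper's argument that (2) implies (1), is correct.
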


One of the consequences of the previous theorem is that there exist CIM-groups that are not IM-groups. Let us denote by $F(G)$ the Fitting subgroup of $G.$ From the results mentioned above, if $G$ is an IM-group, then both $F(G)$ and $G/F(G)$ are elementary abelian. In the case of CIM-groups, it remains true that $F(G)$ is elementary abelian, while $G/F(G)$ is abelian but not necessarily elementary abelian. In fact, the following result holds:
\begin{thm}\label{qab}
	For every finite abelian group $A$, there exists a finite  CIM-group $G$ such that $G/F(G)$ is isomorphic to $A.$
\end{thm}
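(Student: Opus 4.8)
The plan is to first pin down the Fitting subgroup of an arbitrary group of the shape produced by Theorem \ref{supersu}, thereby reducing Theorem \ref{qab} to a problem about the quotient $H/C_H(N)$, and then to build, for a prescribed $A$, a group of that shape which realizes $A$ as this quotient and which is a CIM-group.

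\emph{Reduction.} Write $G=N\rtimes H$ with $N=V_1^{\delta_1}\times\cdots\times V_r^{\delta_r}$ as in Theorem \ref{supersu}, and set $K=C_H(N)=\bigcap_{j=1}^r C_H(V_j)$. First I would show $F(G)=N\times K$. Indeed $N$ is nilpotent and normal, so $N\le F(G)$; and $K$ is normal in $G$ (normal in the abelian group $H$ and centralized by $N$, with $G=NH$) and centralizes $N$, so $N\times K$ is normal and nilpotent, giving $N\times K\le F(G)$. Conversely, since $N$ is a normal Hall subgroup of order coprime to $|H|$, nilpotency of $F(G)$ yields $F(G)=N\times L$ with $L$ its Hall $\pi'$-subgroup ($\pi$ the primes dividing $|N|$); then $L$ centralizes $N$, so $L\le C_G(N)=N\times K$, and being a $\pi'$-group $L\le K$. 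Hence $F(G)=N\times K$ and $G/F(G)\cong H/K$. Thus it suffices to produce a CIM-group in which $H/C_H(N)\cong A$, and the cleanest route is to make $H$ act faithfully on $N$, so that $K=1$ and $H\cong A$.

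\emph{Construction.} Given a finite abelian $A$, decompose it into invariant factors $A\cong\mathbb{Z}/n_1\oplus\cdots\oplus\mathbb{Z}/n_k$ and put $H=A$. By Dirichlet's theorem choose pairwise distinct primes $p_1,\dots,p_k$ with $p_j\equiv 1\pmod{n_j}$ and $p_j\nmid|A|$. Since $\mathbb{F}_{p_j}^{\times}$ is cyclic of order $p_j-1$, it contains an element of order $n_j$, so there is a character $\chi_j\colon A\to\mathbb{F}_{p_j}^{\times}$ mapping the $j$-th invariant factor isomorphically onto $\langle$ that element $\rangle$ and killing the others; let $V_j=\mathbb{F}_{p_j}$ be the corresponding (irreducible, prime-order) $A$-module, so that $C_H(V_j)=\ker\chi_j=\bigoplus_{i\ne j}\mathbb{Z}/n_i$ and $\bigcap_j\ker\chi_j=1$. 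With multiplicities $\delta_j\ge 2$ set $G=(V_1^{\delta_1}\times\cdots\times V_k^{\delta_k})\rtimes A$. This $G$ has exactly the shape of Theorem \ref{supersu} (the $p_j$ are distinct and coprime to $|A|$), and by the reduction $F(G)=N$ and $G/F(G)\cong A$.

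\emph{The main obstacle} is to verify that $G$ is a CIM-group, i.e. the criterion of Theorem \ref{completo}: $\bigcap_{j\in J_G(h)}C_{M_H(h)}(V_j)=\langle h\rangle$ for every $h\in H$ with $\langle h\rangle\ne H$. Here I would first record the elementary fact that in an abelian group $M_H(h)=\langle h\rangle\,\Phi(H)$ (pass to $H/\Phi(H)$, where the intersection of the maximal subgroups through a nonzero element is the cyclic subgroup it generates), so that the criterion reduces to a statement about $A$ and the prescribed characters $\chi_j$, namely that $\langle h\rangle\Phi(A)$, intersected with the relevant kernels $\ker\chi_j$, collapses to $\langle h\rangle$. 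This is exactly where the multiplicities enter: with $\delta_j=1$ the subgroup $V_j\cong\mathbb{Z}/p_j$ is a cyclic minimal normal subgroup lying in a \emph{unique} maximal subgroup of $G$, hence cannot be an intersection of maximal subgroups; already $\mathbb{F}_5\rtimes\mathbb{Z}/4$ (with $\mathbb{Z}/4$ acting faithfully) fails to be a CIM-group for this reason. Taking $\delta_j\ge 2$ supplies enough complements (governed by $H^1(A,V_j)$, i.e. by the abundance of maximal subgroups meeting $N$ in a prescribed submodule) that the nearly trivial intersections of these complements cut every cyclic subgroup of $N$, and more generally every $\langle h\rangle$, down to itself. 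Carrying out this verification uniformly over all $h$ with $\langle h\rangle\ne H$, and controlling the interplay between $M_H(h)=\langle h\rangle\Phi(A)$ and the kernels $\ker\chi_j$ on the index set $J_G(h)$, is the delicate part; the arithmetic input (Dirichlet) and the Fitting computation are routine by comparison.
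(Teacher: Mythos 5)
There is a genuine gap: the construction you propose does not produce a CIM-group in general, so the verification you defer as ``the delicate part'' cannot actually be carried out. Your family of modules, one per invariant factor with $C_H(V_j)=\bigoplus_{i\ne j}\mathbb{Z}/n_i$, is too sparse. The criterion of Theorem \ref{completo} requires $\bigcap_{j\in J_G(h)}C_{M_H(h)}(V_j)=\langle h\rangle$, and when $h$ lies in none of the kernels $\ker\chi_j$ the index set $J_G(h)$ is empty, so the left-hand side is all of $M_H(h)=\langle h\rangle\Phi(A)$; this forces $\Phi(A)\le\langle h\rangle$, which fails in general. Concretely, take $A=\mathbb{Z}/8$: you get a single faithful module $V_1$ of prime order $p\equiv 1\pmod 8$ and $G=V_1^2\rtimes A$; for the element $h$ of order $2$ one has $J_G(h)=\emptyset$ while $M_H(h)$ is the unique maximal subgroup of $A$, of order $4$, so the criterion fails and $\langle h\rangle$ is not an intersection of maximal subgroups. (This is exactly the phenomenon exhibited by the paper's example $G_3$, where $J_G(x^4)=\emptyset$ and $M_H(x^4)=\langle x^2\rangle$.) The same failure occurs for $A=\mathbb{Z}/2\oplus\mathbb{Z}/4$ at the ``diagonal'' element $h=(1,2)$. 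Note also that the multiplicities $\delta_j\ge 2$, which you identify as the crux, are not the real issue here; they are needed for a different reason (so that the relevant index set depends only on the $H$-part of an element), whereas the obstruction above persists no matter how large you take the $\delta_j$.

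The missing idea is that you need enough modules to separate \emph{every} pair $(a,b)$ with $\langle a\rangle\ne A$ and $b\notin\langle a\rangle$: for each such pair there must be some module centralized by $a$ but not by $b$, so that $J_G(a)$ is nonempty and the intersection $\bigcap_{j\in J_G(a)}C_A(V_j)$ already cuts down to $\langle a\rangle$ before $M_H(a)$ even enters. This is what the paper does: for each such pair it chooses a subgroup $C\le A$ with $a\in C$, $b\notin C$ and $A/C$ cyclic, uses Dirichlet to find a prime $p\equiv 1\pmod{|A/C|}$ and a module of order $p$ with kernel containing $a$ but not $b$, takes pairwise distinct primes over all pairs, and forms the semidirect product with all these modules squared. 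Your reduction $F(G)=N\times C_H(N)$ and the observation $M_H(h)=\langle h\rangle\Phi(H)$ are both correct and harmless, but the construction itself must be replaced.
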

The property of being an IM-group is inherited by quotients (see \cite[Lemma 3.3.7]{sh}). From Proposition \ref{qab}, it follows that this is no longer true for CIM-groups (in fact, every finite abelian group appears as a quotient of some CIM-group, but only elementary abelian groups are CIM-groups). Furthermore, normal subgroups of IM-groups are IM-groups (see \cite[Lemma 3.3.7]{sh}) and IM-groups are T-groups (see \cite[Lemma 3.3.9]{sh}), that is, groups in which the normality relation is transitive. The same results also hold for finite CIM-groups (see Propositions \ref{tcim} and \ref{ns}), but while in the case of IM-groups these properties can be easily proven, for finite CIM-groups we are only able to deduce these results from Theorems \ref{supersu} and \ref{completo}, thus implicitly relying on the classification of finite simple groups.

While the class of finite CIM-groups strictly contains that of IM-groups, the same does not occur when considering the class of finite groups in which every abelian subgroup is the intersection of maximal subgroups. We call such groups AIM-groups. As a consequence of Theorem \ref{supersu} and \ref{completo}, we obtain:

\begin{thm}\label{aim}
	A finite group $G$ is an IM-group if and only if it is an AIM-group.
\end{thm}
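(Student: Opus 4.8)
The goal is to prove Theorem~\ref{aim}: a finite group $G$ is an IM-group if and only if it is an AIM-group. Since every cyclic subgroup is abelian, an AIM-group is automatically a CIM-group, so the whole machinery of Theorems~\ref{supersu} and~\ref{completo} applies to any AIM-group. One direction is immediate: if $G$ is an IM-group, then \emph{every} proper subgroup---abelian or not---is an intersection of maximal subgroups, hence in particular every abelian subgroup is, so $G$ is an AIM-group. The content lies entirely in the converse.

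The plan for the converse is to start from an AIM-group $G$ and show that the elementary abelian Hall-subgroup characterization of IM-groups (recalled in the Introduction) is forced. By Theorem~\ref{supersu}, $G$ is supersoluble and metabelian with a decomposition $G \cong (V_1^{\delta_1}\times\dots\times V_r^{\delta_r})\rtimes H$ where $H$ is abelian, each $V_i$ has prime order, the $|V_i|$ are pairwise distinct, and no $|V_i|$ divides $|H|$. Set $N = V_1^{\delta_1}\times\dots\times V_r^{\delta_r}$, which is a normal Hall subgroup since its order (a product of distinct primes to various powers, each prime coprime to $|H|$) is coprime to $|H|$. I would first observe that $N$ is the Fitting subgroup: $N$ is nilpotent and normal, and $G/N \cong H$ is abelian of order coprime to $|N|$, so $F(G) = N$ by coprimality and the fact that $H$ acts faithfully enough. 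Thus $F(G)$ is already elementary abelian in the paper's square-free-exponent sense. To conclude $G$ is an IM-group it then suffices, by the stated characterization, to show two things: that $H$ itself is elementary abelian, and that every subgroup of $N$ is normal in $G$.

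The key leverage is that AIM is a much stronger hypothesis than CIM, because it forces \emph{every} abelian subgroup---not merely the cyclic ones---to be an intersection of maximal subgroups, and one can apply the CIM-criterion of Theorem~\ref{completo} to a richer supply of elements and subgroups. Concretely, I would argue that if $H$ failed to be elementary abelian, then $H$ would contain an element $h$ of prime-power order $p^2$; the abelian subgroup $\langle h\rangle$ (or a suitable abelian subgroup containing it) would then fail to be an intersection of maximal subgroups, because the quotient-inheritance obstruction seen in Proposition~\ref{qab}---precisely the phenomenon that allows $G/F(G)$ to be non-elementary-abelian for CIM-groups---must be absent when \emph{all} abelian subgroups are required to be intersections of maximals. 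Here one tests the criterion $\bigcap_{j\in J_G(h)}C_{M_H(h)}(V_j)=\langle h\rangle$ against abelian subgroups generated by elements whose orders are not square-free and shows it cannot hold, forcing $H$ to have square-free exponent. A parallel argument, testing the AIM condition on abelian subgroups of the form $\langle h\rangle \times W$ with $W\le N$, forces every subgroup $W$ of $N$ to be normal in $G$.

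I expect the main obstacle to be the passage from the cyclic criterion of Theorem~\ref{completo} to a workable condition on all abelian subgroups, i.e.\ isolating exactly which extra abelian (non-cyclic) subgroups detect the failure of $H$ to be elementary abelian and the failure of subgroups of $N$ to be normal. The delicate point is that the AIM condition must be checked on abelian subgroups of $G$ that need not lie inside $H$ or inside $N$, but rather meet both factors; handling the interaction of $h\in H$ with a chosen abelian $W\le N$, and translating ``$\langle h,W\rangle$ is an intersection of maximal subgroups'' back into the centralizer language of Theorem~\ref{completo}, is where the real work lies. Once that translation is set up, the conclusions that $H$ is elementary abelian and that every subgroup of $N$ is normal should follow by contradiction, after which the characterization of IM-groups recalled in the Introduction completes the proof.
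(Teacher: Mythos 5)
Your overall skeleton is right (the easy direction, the reduction to the CIM structure theorem, the goal of showing $H$ elementary abelian and every subgroup of $F(G)$ normal, then invoking the IM characterization), but the decisive step is missing and the specific mechanism you propose for it would fail. You suggest detecting the failure of $H$ to be elementary abelian by testing the criterion of Theorem~\ref{completo} on a cyclic subgroup $\langle h\rangle$ with $h$ of order $p^2$, claiming the criterion ``cannot hold'' for such elements. But it can: in a CIM-group \emph{every} cyclic subgroup is by definition an intersection of maximal subgroups, and the paper's example $G_1=(V_1^2\times V_2^2)\rtimes C_8$ is a CIM-group whose complement $H$ is cyclic of order $8$, with the criterion satisfied for every $h\in H$. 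So no cyclic subgroup, and no criterion applied to cyclic subgroups, can witness the obstruction; your hedge ``(or a suitable abelian subgroup containing it)'' is exactly where the proof lives, and you never identify that subgroup.

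The paper's argument is much shorter and uses a single, concrete non-cyclic abelian subgroup: $F(G)$ itself. Since $F(G)$ is abelian, the AIM hypothesis says it is an intersection of maximal subgroups; but the maximal subgroups of $G$ containing $F(G)$ are exactly the $F(G)K$ with $K$ maximal in $H$, whose intersection is $F(G)\Phi(H)$. Hence $\Phi(H)=1$, i.e.\ $H$ is elementary abelian. (In $G_1$ this intersection is $F(G_1)\langle x^2\rangle\neq F(G_1)$, which is why $G_1$ is CIM but not AIM.) Note also that your second task --- every subgroup of $F(G)$ is normal in $G$ --- needs no extra AIM input: since the $|V_i|$ are distinct primes and $H$ acts on each homogeneous component $V_i^{\delta_i}$ by scalars, every subgroup of $F(G)$ is automatically $H$-invariant. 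So the ``translation to all abelian subgroups'' that you flag as the main obstacle is not needed at all; one application of AIM to $F(G)$ closes the proof.
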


Denote by $P_{IM}(G)$ the probability that a proper subgroup of $G$ is the intersection of maximal subgroups. We observe that requiring  $P_{IM}(G)$ to be sufficiently close to 1 does not guarantee that $G$ is an IM-group.
Indeed the following holds:

\begin{thm}\label{solouno}
	For every positive integer $n$ there exists a finite group $G$ containing at least $n$ subgroups and in which there is only one proper subgroup which is not the intersection of maximal subgroups.
\end{thm}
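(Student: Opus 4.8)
The plan is to construct an explicit family of groups, parametrized so that the subgroup count grows without bound while exactly one proper subgroup fails to be an intersection of maximal subgroups. Looking at the characterization in Theorems \ref{supersu} and \ref{completo}, the natural strategy is to start from a group that is \emph{almost} a CIM-group, but fails the intersection condition for a single subgroup. Since the obstruction in Theorem \ref{completo} is attached to cyclic subgroups $\langle h\rangle$ with $h\in H$, I would look for a semidirect product $G=V\rtimes H$ where the Frattini-type intersection $M_H(h)$ interacts with the module structure in such a way that the failure is localized at one subgroup only.

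First I would fix a prime $p$ and take $H$ to be cyclic, say $H=\langle h\rangle$ of order $p^k$ for large $k$, so that $H$ alone already contributes arbitrarily many subgroups (the $k+1$ subgroups of a cyclic $p$-group), giving the required growth in the number of subgroups of $G$ as $k\to\infty$. The key point is that in a cyclic $p$-group, $M_H(x)$ equals $H$ for every non-generator $x$ (since the unique maximal subgroup $\Phi(H)$ contains every non-generator), so the only problematic element is a generator, where $M_H(h)=H$ and the condition $\bigcap_{j\in J_G(h)}C_{M_H(h)}(V_j)=\langle h\rangle$ must be examined directly. I would then attach a small module action: choose a single irreducible $H$-module $V$ of prime order $q\neq p$ on which $h$ acts nontrivially (so $q\equiv 1\pmod{p}$, forcing $V$ one-dimensional over $\mathbb F_q$), and set $G=V\rtimes H$. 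The whole structure is engineered so that \emph{every} proper subgroup of $G$ is an intersection of maximal subgroups \emph{except} one distinguished cyclic subgroup — most naturally $H$ itself, or a complement to $V$ — which lies in no proper intersection pattern realizable by the maximal subgroups.

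The main verification then splits into two parts. The first is to enumerate the maximal subgroups of $G=V\rtimes H$ and check, subgroup by subgroup, that all proper subgroups other than the distinguished one arise as intersections of maximals; this is routine given the supersoluble structure, because the maximal subgroups are either $V\rtimes\Phi(H)$ together with the $H$-conjugates/complements, and their intersections sweep out the lattice except at the top. The second and harder part is to show that the distinguished subgroup genuinely fails to be such an intersection: I would argue that every maximal subgroup of $G$ containing it in fact contains a strictly larger subgroup, so their intersection overshoots. Concretely, if $H$ is the distinguished subgroup, every maximal subgroup containing $H$ also contains $V$ (because the only maximals not containing $V$ are the complements conjugate to $H$, and a maximal containing $H$ but avoiding $V$ would have to be a self-normalizing complement, which the module choice precludes), so $\bigcap\{M : M\text{ maximal}, H\le M\}=G\neq H$.

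I expect the main obstacle to be the second part: pinning down precisely which maximal subgroups contain the distinguished subgroup and ruling out that their intersection collapses back down to it. This requires controlling the complements to $V$ in $G$ and their behavior under the $H$-action, i.e.\ a careful use of the cohomology $H^1(H,V)$ to count complements and a verification that no combination of maximal subgroups isolates $H$. By contrast, the growth of the number of subgroups is essentially free once $H$ is a cyclic $p$-group of unbounded order, so the crux is engineering the module $V$ and the prime $q$ so that exactly one subgroup — and no other — is exempted from the intersection property while the bulk of the lattice continues to satisfy it.
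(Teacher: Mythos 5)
There is a genuine gap: the construction you propose does not localize the failure to a single subgroup, and the subgroup you single out as the failing one is in fact an intersection of maximal subgroups. In $G=V\rtimes H$ with $H=\langle h\rangle$ cyclic of order $p^{k}$ acting faithfully on $V$ of prime order $q$, the maximal subgroups are exactly $V\Phi(H)$ and the $q$ conjugates $H^{v}$, $v\in V$. In particular $H$ itself is maximal (it complements the minimal normal subgroup $V$), so it is trivially the intersection of maximal subgroups containing it; your ``second part'' cannot succeed, and the parenthetical claim that a maximal subgroup containing $H$ must contain $V$ is false. Worse, for any $K\leq H$ with $1\neq K\lneq\Phi(H)$ the only maximal subgroups containing $K$ are $V\Phi(H)$ and $H$ (no other conjugate $H^{v}$ contains $K$, since faithfulness forces $C_{V}(K)=0$), so the intersection of the maximal subgroups containing $K$ is $\Phi(H)\supsetneq K$; similarly, for $K\lneq\Phi(H)$ the only maximal subgroup containing $VK$ is $V\Phi(H)$. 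This already produces at least $2k-3$ proper subgroups that are not intersections of maximal subgroups, and this count grows with exactly the parameter $k$ you rely on to make the subgroup lattice large. (A related slip: for a non-generator $x$ of a cyclic $p$-group, $M_{H}(x)=\Phi(H)$, not $H$; this is precisely why such subgroups fail.)

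The paper takes the opposite route: it keeps $H$ small and fixed ($H$ cyclic of order $4$) and obtains the unbounded subgroup count from module multiplicities, setting $G=(V_{1}^{\delta_1}\times V_{2}^{\delta_2})\rtimes H$ with $|V_1|=5$, $|V_2|=13$, faithful actions and $\delta_1,\delta_2\geq 2$. For a proper subgroup $K$ one passes to $G/(K\cap F)$, where $F=V_{1}^{\delta_1}\times V_{2}^{\delta_2}$ (legitimate because every subgroup of $F$ is normal in $G$, as $H$ acts by scalars on each homogeneous component), and applies Lemma \ref{brutto} to the cyclic image $K/(K\cap F)$; the unique failure turns out to be $K=F$, the Fitting subgroup, not a complement. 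A repaired version of your idea does exist — fix $k=2$ and let $q\to\infty$, so that $G=V\rtimes C_{p^{2}}$ has $4+2q$ subgroups and the unique non-intersection is $V$ — but note that even there the exceptional subgroup is the normal one of prime order, not $H$, and the growth must come from $q$, not from $k$.
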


From the proof of the previous theorem, it also follows that requiring all subgroups of $G$ whose order is a prime power to be intersections of maximal subgroups is not sufficient to guarantee that $G$ is an IM-group.

%In Section \ref{esempi}, we provide some examples to better illustrate how Theorem \ref{completo} can be applied. In particular, one of these examples is a CIM-group containing an abelian subgroup that is not the intersection of maximal subgroups. Therefore, the property that every abelian subgroup is the intersection of maximal subgroups is stronger than the property that only cyclic subgroups are.

Another property of a finite group that can be considered is the following: all its Sylow subgroups are intersection of maximal subgroups. One may wonder if this property is strong enough to imply the solubility of finite groups that satisfy it. In Section \ref{ultimo} we prove that the answer is negative. 

\subsection*{Acknowledgements} I thank Chiara Tamburini for drawing my attention to this topic and for her useful suggestions.

\section{CMI-groups are soluble}

The aim of this section is to prove that a finite CIM-group is soluble. A crucial ingredient of our proof is the following result, which depends on the classification of the finite simple groups.   
\begin{thm}\label{almost}\cite[Theorem 3]{indip}
	Let $S$ be a non-abelian finite simple group. Then there exist non-commuting elements $s,x \in S$ such that, for each almost simple group $X$ with socle $S$, the intersection of the maximal subgroups of $X$ containing $s$ contains also $x$.
\end{thm}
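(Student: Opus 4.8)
The plan is to recast the conclusion as a \emph{domination} statement. Writing $M_X(s)=\bigcap\{M : M \text{ maximal in } X,\ s\in M\}$ for the intersection of the maximal subgroups of $X$ containing $s$, I want non-commuting $s,x\in S$ with $x\in M_X(s)$ for every almost simple $X$ with $\soc(X)=S$. The first reduction exploits the socle. A maximal subgroup $M$ of $X$ either contains $S$, in which case $s,x\in S\subseteq M$ automatically and $M$ imposes no constraint, or else $M\not\supseteq S$; then $MS=X$ and the core $M\cap S$ is a proper subgroup of $S$ with $M\cap S\trianglelefteq M$. By the standard description of maximal subgroups of almost simple groups, such a core is the intersection of the $M$-orbit of maximal subgroups of $S$ containing it (a single maximal subgroup in the ordinary case, a genuine intersection in the novelty case). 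Thus only these socle-supplementing maximal subgroups matter, and all the relevant constraints are governed by the maximal-subgroup lattice of $S$ together with the action of $\Aut(S)$.

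Second, I would make the transfer to $S$ explicit. Fix $s\in S$, let $K_1,\dots,K_m$ be the maximal subgroups of $S$ containing $s$, and set $K=\bigcap_{i}K_i$. For any almost simple $X$ and any socle-supplementing maximal $M$ with $s\in M$, the core $M\cap S$ is an intersection of some of the $K_i$ (precisely those in the relevant $M$-orbit, each of which contains $s$), so $M\cap S\supseteq K$. Hence, if I choose $x\in K$, then $x\in M\cap S\subseteq M$ for every such $M$, while maximal subgroups containing $S$ contain $x$ for free; therefore $x\in M_X(s)$ simultaneously for all $X$. This reduces the theorem to an intrinsic statement about $S$: find $s\in S$ such that $K=\bigcap_i K_i$ is non-abelian with $s\notin\Z(K)$, and then take any $x\in K\setminus C_S(s)$. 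Since by Herstein's theorem a finite group with an abelian maximal subgroup is soluble, \emph{every} maximal subgroup of a non-abelian simple group is non-abelian; so it suffices to produce an element $s$ lying in a \emph{unique} maximal subgroup $K$ of $S$ with $s\notin\Z(K)$ (or, more generally, an $s$ whose maximal-overgroup intersection is non-abelian and does not centralize it).

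Finally, establishing the existence of such an $s$ requires the classification of finite simple groups, treated family by family. For groups of Lie type the natural candidate is an element of a suitably chosen maximal torus or a Singer-type cyclic subgroup whose unique maximal overgroup is the corresponding torus normalizer, with $x$ a non-commuting element of that normalizer; for alternating groups one uses a long-cycle or large-order element sitting in a single intransitive or imprimitive maximal subgroup, and the sporadic groups are checked from their known maximal-subgroup data. \textbf{The main obstacle} I expect is the verification that the chosen $s$ really lies in only one maximal subgroup of $S$ (and is non-central there), together with \emph{uniformity over $X$}: although the orbit argument of the second paragraph absorbs the outer automorphisms once the candidate is fixed in $S$, one must still ensure that the existence of such an $s$ holds across all families, including the delicate small cases such as $\mathrm{PSL}_2(q)$ and low-rank or low-degree groups, where elements in a unique maximal subgroup are scarcest and the generic construction can break down and must be handled directly.
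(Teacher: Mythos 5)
First, a point of comparison: this statement is not proved in the paper at all --- it is imported verbatim from \cite[Theorem 3]{indip}, so there is no internal proof to measure your attempt against. Judged on its own, your reduction step is correct and worth recording explicitly: if $M$ is maximal in $X$ with $S\not\leq M$ and $s\in M$, then writing $H=M\cap S$ and choosing any maximal subgroup $L$ of $S$ with $H\leq L$, the subgroup $L_0=\bigcap_{m\in M}L^m$ is $M$-invariant, so $ML_0$ is a subgroup lying between $M$ and $X$; the case $ML_0=X$ forces $S=(S\cap M)L_0=L_0\leq L$, a contradiction, so $ML_0=M$ and $H=L_0$. Since each $L^m$ contains $s$, this gives $M\cap S\supseteq\bigcap_i K_i$ with $K_i$ the maximal subgroups of $S$ containing $s$, and hence the whole theorem reduces to finding $s\in S$ whose maximal-overgroup intersection in $S$ is not contained in $C_S(s)$. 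This is a clean and valid uniformization over all $X$ with socle $S$.

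The genuine gap is that this reduction is only the (comparatively easy) preamble: the entire content of the theorem is the existence, for \emph{every} non-abelian finite simple group $S$, of such an element $s$, and your proposal replaces this with a description of candidate elements rather than a verification. Your proposed sufficient criterion --- an element lying in a unique maximal subgroup $K$ with $s\notin Z(K)$ --- is not available uniformly (for many simple groups, e.g.\ various $\mathrm{PSL}_2(q)$ and low-degree alternating and classical groups, the natural torus-normalizer or Singer-type candidates fail to be the unique maximal overgroup, or the relevant subgroup fails to be maximal), so one is forced back to the weaker condition $\bigcap_iK_i\not\leq C_S(s)$, which must then be checked family by family with full knowledge of the maximal subgroups; this is precisely the long CFSG-dependent analysis carried out in \cite{indip} and omitted here. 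As written, the proposal is an accurate plan plus a correct first lemma, but not a proof.
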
  

\begin{thm}\label{solub} A finite CIM-group is soluble.
\end{thm}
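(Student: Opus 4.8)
The plan is to argue by contradiction, using Theorem~\ref{almost} to locate, inside any non-abelian section that a non-soluble $G$ would have to contain, a cyclic subgroup strictly smaller than the intersection of the maximal subgroups containing it. For $g\in G$ write $M_G(g)$ for the intersection of the maximal subgroups of $G$ containing $g$; thus $G$ is a CIM-group precisely when $M_G(g)=\langle g\rangle$ for every $g\in G$. Two cheap reductions come first. Since the trivial subgroup is cyclic, the CIM hypothesis forces $1=M_G(1)=\frat(G)$, so $\frat(G)=1$. Next, if $A$ is an abelian minimal normal subgroup and $1\neq a\in A$, then every maximal subgroup $M\ni a$ must contain $A$: otherwise $AM=G$ and $A\cap M\nn G$, so minimality gives $A\cap M=1$ and hence $a=1$. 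Thus $A\le M_G(a)=\langle a\rangle$, so $A=\langle a\rangle$ has prime order. Therefore \emph{in a CIM-group every abelian minimal normal subgroup is cyclic of prime order}.

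I would use this to force a non-abelian chief factor at the bottom. Suppose $G$ is a non-soluble CIM-group with no non-abelian minimal normal subgroup; then $\soc(G)$ is abelian, so $E(G)=1$ and $F^*(G)=F(G)$. By Gasch\"utz, $\frat(G)=1$ makes $F(G)$ the product of the abelian minimal normal subgroups, i.e.\ $F(G)=\soc(G)$; together with $\C_G(F^*(G))\le F^*(G)$ and the fact that $\soc(G)$ is abelian, this yields $\C_G(\soc(G))=\soc(G)$. As each abelian minimal normal subgroup is normal of prime order, $G$ stabilises every one of them and acts on $\soc(G)$ by scalars, so $G/\soc(G)\hookrightarrow\aut(\soc(G))$ has abelian image and $G$ is soluble, a contradiction. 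Consequently a non-soluble CIM-group must possess a non-abelian minimal normal subgroup $N=S_1\times\cdots\times S_k$ with each $S_i\cong S$ a non-abelian simple group.

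The heart of the proof is then a transfer argument. Let $s_0,x_0\in S$ be the non-commuting elements supplied by Theorem~\ref{almost}, and let $s,x\in N$ be their copies in the first factor $S_1$, so that $x\notin\langle s\rangle$. I would show that $x\in M$ for every maximal subgroup $M\ni s$, contradicting $M_G(s)=\langle s\rangle$. Put $C=\C_G(N)$. If $C\not\le M$ then $CM=G$; since $C$ centralises $N$ it normalises $M\cap N$, which $M$ also normalises, so $M\cap N\nn G$, and minimality together with $1\neq s\in M\cap N$ forces $N\le M$, giving $x\in N\le M$. If instead $C\le M$, then $M/C$ is a maximal subgroup of $\overline G=G/C$ containing $\overline s$, and $\overline G$ acts faithfully on $\overline N\cong S^k$ with $\C_{\overline G}(\overline N)=1$. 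When $k=1$ the group $\overline G$ is almost simple with socle $S$, so Theorem~\ref{almost} gives $\overline x\in M/C$, i.e.\ $x\in M$, and the proof is complete.

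The main obstacle is the case $k\ge 2$, where $\overline G$ has non-abelian socle $S^k$ and Theorem~\ref{almost} does not apply verbatim. Here I would analyse the maximal subgroups $M/C$ of $\overline G$ containing $\overline s=(s_0,1,\dots,1)$ through the structure of $\overline M\cap\overline N$ as an $\overline M$-invariant subgroup of $S^k$ (O'Nan--Scott/Aschbacher--Scott): since $\overline M\cap\overline N$ contains an element supported on the first factor alone, no diagonal-type maximal subgroup can contain $\overline s$, and the surviving product-type maximal subgroups restrict on $S_1$ to maximal subgroups through $s_0$, so that the intersection of their first-coordinate projections contains $x_0$ by Theorem~\ref{almost}; equivalently, one passes to the almost simple group induced by $\nor_G(S_1)$ on $S_1$ and transfers the witness there. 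Verifying that these are the only maximal subgroups through $s$, and that the projection bookkeeping actually delivers $x$, is the delicate point. The clean $k=1$ reduction shows why the centraliser trick disposes of every maximal subgroup not containing $C$, thereby isolating this wreath-type analysis as the only genuine difficulty.
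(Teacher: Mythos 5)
Your proposal follows the same route as the paper's proof: trivial Frattini subgroup, abelian minimal normal subgroups are cyclic of prime order (so that if every minimal normal subgroup is abelian then $G/F^*(G)$ embeds in a product of abelian automorphism groups and $G$ is metabelian), and otherwise Theorem \ref{almost} supplies, inside a non-abelian minimal normal subgroup $N\cong S^t$, an element $g=(s,1,\dots,1)$ such that some $x\notin\langle g\rangle$ lies in every maximal subgroup containing $g$. Your two reductions and your $k=1$ case are complete and correct; the detour through $C=\C_G(N)$ is sound, although the paper avoids it by working directly with a maximal subgroup $M$ containing $g$ but not $N$, for which automatically $G=NM$.

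The only unfinished step is exactly the one you flag, the case $k\ge 2$, and your sketch does close along the lines of the paper. Since $G=NM$ and $N$ normalises each factor, $M$ permutes $S_1,\dots,S_t$ transitively, so in the O'Nan--Scott description of the maximal $M$-invariant subgroup $M\cap N$ the diagonal blocks all have the same size; size $1$ would force $M\cap N=N$, so the blocks have size $\ge 2$ and cannot contain $(s,1,\dots,1)$ with $s\neq 1$, which kills the diagonal type. In the product type, $M\cap N=R_1\times\cdots\times R_t$ with $R_1=S_1\cap M$ a maximal $\nor_M(S_1)$-invariant subgroup of $S_1$. Writing $\phi\colon \nor_G(S_1)\to\aut(S)$ for the conjugation action, $G=NM$ gives $\nor_G(S_1)=N\,\nor_M(S_1)$, hence $X:=\phi(\nor_G(S_1))=S\,Y$ with $Y:=\phi(\nor_M(S_1))$ and $S\cap Y=\phi(R_1)$; thus $Y$ is a maximal subgroup of the almost simple group $X$ containing $\phi(g)=s$, and Theorem \ref{almost} puts $x$ into $Y\cap S=\phi(R_1)$, whence $(x,1,\dots,1)\in M$. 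This is precisely the ``projection bookkeeping'' you deferred; there is no further obstruction, but as written your argument establishes the theorem only for $k=1$ and must be completed by this verification.
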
     
\begin{proof}Let $G$ be a finite CIM-group. In particular the identity subgroup is the intersection of the maximal subgroups of $G$ and therefore the Frattini subgroup of $G$ is trivial. This implies that the generalized Fitting subgroup $F^*(G)$ of $G$ is isomorphic to the direct product of minimal normal subgroups of $G$. Let $M$ be an abelian minimal normal subgroup of $G.$ A maximal subgroup of $G$ either contains $M$ or has with $M$ a trivial intersection. In particular if $1\neq m \in M$, then all the maximal subgroups of $G$ containing $m$ contain the entire subgroup $M$, and therefore, since $G$ is a CIM-group,
	it must be $M=\langle m\rangle.$ So all the abelian minimal normal subgroups of $G$ are cyclic.
	Assume that $F^*(G)$ is soluble. Then $F^*(G)=M_1 \times \dots \times M_r,$ where $M_i$ is normal and cyclic for every $1\leq i\leq r.$ Since $C_G(F^*(G))\leq F^*(G),$ we have that
	$$\frac{G}{F^*(G)} = \frac{G}{C_G(F^*(G))} = \frac{G}{\bigcap_{1\leq i \leq r}C_G(M_i)} \leq \prod_{1\leq i \leq r}\frac{G}{C_G(M_i)}
	\leq \prod_{1\leq i \leq r}\aut(M_i),$$
	but then $G/F^*(G)$ is abelian and $G$ is metabelian. 
	
	Thus we may assume that $G$ contains a non-abelian minimal normal subgroup, say $N.$
	So $N\cong S^t$ where $S$ is
	a non-abelian finite simple group.
	Let $S_i$ be the
	subgroup of $N$ consisting of the $t$-uples with 1 in all but the $i$-th
	component: $S \cong S_i$ and $ N \cong S_1 \times \dots \times S_t$. 
	Choose $s, x \in S$ as described in Theorem \ref{almost} and let $g=(s,1,\dots,1) \in S_1.$ Consider a maximal subgroup $M$ of $G$ with $g\in M$ and $N\not\leq M.$ Since $G=NM,$ we must have that $N\cap M$ is a maximal $M$-invariant subgroup subgroup of $N.$
	Since $g\in M\cap N,$ it must be $M\cap N\neq 1.$ Consider the homomorphism $\phi: N_G(S_1)\to \aut(S)$ induced by the conjugation action of $N_G(S_1)$ on $S_1$ and let $X:=\phi(N_G(S_1)).$ Then $X$ is an almost simple group with socle $\phi(S_1)$ isomorphic to $S.$ 
	Let $\pi _i$ be the projection of the group $N=S_1 \times \dots \times S_t$ onto
	the $i$-th component $S_i$. The possibilities for $M\cap N$  are described in
	the proof of the O'Nan-Scott theorem \cite{ons}. Since $M\cap N\neq 1$, then one of the following occurs:
	\begin{enumerate}
		\item  $\pi _1(N\cap M)=S_1$. Since $N\not\leq M$ it must be $t>1.$ Moreover there exists a partition $I_1, \dots, I_l$ of the set
		$\{1,\dots,t\}$ such that $M\cap N=D_1 \times \dots \times D_l$, where $D_i$ is a full
		diagonal subgroup of $\prod_{j \in I_i}S_j$ (i.e. the restriction to 
		the subgroup $D_i$ of the
		projection  $\pi_k: \prod_{j \in I_i}S_j \rightarrow S_k$ is an isomorphism 
		for any $k \in I_i$). But $(s,1,\dots,1)$ cannot be contained in such product of diagonal subgroups, so we may exclude this first possibility.
		\item $\pi_1(N\cap M)=R_1 < S_1$. In this case
		$N\cap M= R_1\times \dots \times R_m$ with $R_i\cong R_1$ for every $1\leq i\leq t$ and
		$R_1$ is a maximal $N_M(S_1)$-invariant subgroup of $S_1.$
		Let $Y=\phi(N_M(S_1)).$ From $NM=G$ it follows
		$SY=X.$ Moreover $S\cap Y=\phi(S_1\cap M)=\phi(R_1).$ It follows that
		$Y$ is a maximal subgroup of $X$ containing $s=\phi(g).$ But then $x \in \phi(R_1)$ and  therefore $(x,1,\dots,1)\in M.$ 
	\end{enumerate}
	We have so proved that every maximal subgroup of $G$ containing $g$ contains also $(x,1,\dots,1),$ in contradiction with the assumption that $G$ is a CIM-group (notice that the fact that $s$ and $x$ do not commute, implies $(x,1,\dots,1)\not\leq \langle g\rangle).$
\end{proof} 

\section{Soluble CIM-groups}

Let $G$ be a soluble finite CIM-group. Since the Frattini subgroup of $G$ is trivial, the Fitting subgroup $F(G)$ of $G$ is a direct product of minimal normal subgroups of $G,$ and is complemented in $G$. Let $H$ be a complement of $F(G)$ in $G.$ Then
$$G=(V_1^{\delta_1} \times \dots \times {V_r^{\delta_r})}\rtimes H,$$
where $\delta_1, \dots, \delta_r$ are positive integers, and $V_1,\dots,V_r$ are
irreducible $H$-modules that are pairwise non-$H$-isomorphic.

The maximal subgroups of $G$ are of two possible types:
\begin{enumerate}
	\item $M=F(G)K$ with $K$ a maximal subgroup of $H.$
	\item $M=W H^u$ where $F(G)=U\oplus W$ with
	$U$ and $W$  $H$-submodules of $F(G),$ $U$  irreducible and $u\in U.$ We will use the notation $M(W,U,u)$ for such a subgroup.
\end{enumerate}

In particular if $N$ is a minimal normal subgroup of $G$, then $N\leq F(G).$ Moreover if a maximal subgroup of $G$ contains a non-trivial element of $N$, then it contains $N$ and therefore, since $G$ is a CIM-group, $N$ must be cyclic. Thus for every $1\leq i\leq r,$ the order of $V_i$ is a prime, say $p_i.$

Since $C_G(F(G))\leq F(G),$ we must have
$\cap_{1\leq i\leq r} C_H(V_i)=C_H(F(G))=1.$ Hence
$$H\cong \frac{H}{\bigcap_{1\leq i\leq r}C_H(V_i)}\leq \prod_{1\leq i\leq r}\frac{H}{C_H(V_i)}\leq \prod_{1\leq i\leq r}\aut(C_{p_i}),$$
and therefore $H$ is abelian. 

\begin{lemma}\label{coprime}
	If $i\neq j$, then $p_i\neq p_j.$
\end{lemma}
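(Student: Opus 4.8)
The plan is to argue by contradiction: suppose $p_i=p_j=:p$ for some $i\neq j$, and exhibit a cyclic subgroup of $G$ that is strictly smaller than the intersection of the maximal subgroups containing it. Fix an irreducible $H$-submodule $A\cong V_i$ and an irreducible $H$-submodule $B\cong V_j$ of $F(G)$ (one inside each isotypic component $V_i^{\delta_i}$, $V_j^{\delta_j}$), choose nonzero $a\in A$ and $b\in B$, and set $v=a+b$. Since $a,b$ both have order $p$ and lie in the abelian group $F(G)$, the element $v$ has order $p$ and $\langle v\rangle=\{ta+tb:t\in\mathbb Z\}$; because $V_i^{\delta_i}$ and $V_j^{\delta_j}$ are distinct isotypic components, $a\notin\langle v\rangle$. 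I will show that every maximal subgroup of $G$ containing $v$ also contains $a$, contradicting the CIM-property.

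For maximal subgroups of type (1), i.e.\ $M=F(G)K$, we have $a\in F(G)\le M$ for free. For a type (2) subgroup $M=M(W,U,u)=WH^u$ a short order count gives $M\cap F(G)=W$ (indeed $MF(G)=G$, so $|M\cap F(G)|=|M|\,|F(G)|/|G|=|W|$); hence for elements of $F(G)$ the condition $v\in M$ is equivalent to $v\in W$, and $a\in M$ to $a\in W$. So it suffices to prove: whenever $F(G)=U\oplus W$ with $U$ irreducible and $v\in W$, then $a\in W$. The key structural fact is that $W$ must contain every isotypic component not meeting $U$. Writing $U\cong V_k$ and letting $\pi\colon F(G)\to U$ be the projection with kernel $W$, for each $l\neq k$ the restriction $\pi|_{V_l^{\delta_l}}$ is an $H$-homomorphism from a $V_l$-isotypic module to the simple module $U\cong V_k$; since $V_1,\dots,V_r$ are pairwise non-isomorphic, Schur's lemma forces this map to be zero, so $V_l^{\delta_l}\le\ker\pi=W$.

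With this in hand the case analysis is immediate. If $k\neq i$ then $a\in A\le V_i^{\delta_i}\le W$. If $k=i$ then $k\neq j$, so $b\in B\le V_j^{\delta_j}\le W$, and since $v\in W$ we get $a=v-b\in W$. In either case $a\in W\le M$, so $a$ lies in every maximal subgroup containing $v$ while $a\notin\langle v\rangle$; thus $\langle v\rangle$ is not an intersection of maximal subgroups, the desired contradiction. I expect the only delicate point to be the structural claim of the second paragraph---that a complement of a single irreducible summand cannot ``cut'' two distinct isotypic components at once---since this is precisely where the hypothesis that $V_i$ and $V_j$ are non-isomorphic (rather than merely of equal order) enters, via Schur's lemma; checking $M\cap F(G)=W$ and that $U$, being simple, is confined to one isotypic component are the supporting routine steps.
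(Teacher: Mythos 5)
Your proof is correct and follows essentially the same route as the paper's: both take $a,b$ of order $p$ in distinct isotypic components, form $g=ab$, note that type (1) maximal subgroups contain $F(G)$, and use the fact that the kernel $W$ of a type (2) maximal subgroup can avoid only the isotypic component isomorphic to $U$. You merely make explicit (via the projection/Schur argument) what the paper compresses into ``$F(G)=A\oplus W=B\oplus W$ implies $A\cong_H U\cong_H B$''.
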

\begin{proof}
	Assume, by contradiction, $p_i=p_j.$ The Fitting subgroup of $G$ contains two minimal normal subgroups of $G$, say $A$ and $B$, with $A\cong_H V_i$ and $B\cong_H V_j.$ Let $a\in A,$ $b\in B$ with $|a|=|b|=p_i$ and let $g=ab.$  All the maximal subgroups of $G$ of type (1) contains $\langle a, b\rangle.$ So the assumption that $\langle g\rangle$ is the intersection of maximal subgroups of $G$, implies that there must exist a maximal subgroup $M(W,U,u)$ of type $(2)$ which contains $g$ but neither $a$ nor $b$. This implies $F(G)=A\oplus W=B\oplus W,$ and therefore $A\cong_HU\cong_H B$ in contradiction with $V_i \not \cong_H V_j.$
\end{proof}

\begin{lemma}\label{coprimeh}
	$(|H|,p_1\cdots p_r)=1.$
\end{lemma}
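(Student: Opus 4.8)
The plan is to show that for each $i$, the prime $p_i$ does not divide $|H|$. Suppose, for contradiction, that $p_i \mid |H|$ for some $i$.
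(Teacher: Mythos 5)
There is a genuine gap: your proposal consists only of the statement of the strategy and the contradiction hypothesis, and then stops. No argument is given for why $p_i \mid |H|$ leads to a contradiction, so essentially the entire proof is missing. The hypothesis that $G$ is a CIM-group has not been used at all, and it is not obvious how to use it; the whole content of the lemma lies in exhibiting a specific cyclic subgroup that could not be an intersection of maximal subgroups if $p_i$ divided $|H|$.

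For comparison, the paper's argument runs as follows. If $H$ contains an element $y$ of order $p_i$, pick $x \in F(G)$ of order $p_i$ (so $\langle x\rangle \cong_H V_i$) and set $g = xy$. Every maximal subgroup of type (1), i.e.\ of the form $F(G)K$, that contains $g$ automatically contains both $x$ and $y$. Hence, if $\langle g\rangle$ is to be an intersection of maximal subgroups, some maximal subgroup $M = M(W,U,u)$ of type (2) must contain $g$ but not $x$; this forces $x \notin W$, so $U \cong_H \langle x\rangle \cong_H V_i$. Since $\langle x, y\rangle$ is abelian of order $p_i^2$ and $H/C_H(V_i)$ embeds in $\aut(C_{p_i})$, the element $y$ centralizes $V_i$ and hence $u$, so $[y,u]=0$, which gives $y \in H^u \leq M$ and therefore $x = gy^{-1} \in M$, a contradiction. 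To complete your proof you would need to supply this construction (or an equivalent one) together with the case analysis of maximal subgroups of $G$ into the two types described at the start of Section 3.
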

\begin{proof}
	Suppose, by contradiction, that, for some $i\in \{1,\dots,r\},$ $H$ contains an element $y$ of order $p_i.$ Let $x\in F(G)$ of order $p_i$, which implies in particular $\langle x \rangle \cong_H V_i$ and consider $g=xy.$ 
	A maximal subgroup of $G$ of type (1) which contains $g$ contains $\langle x, y\rangle.$ So the assumption that $\langle g\rangle$ is the intersection of maximal subgroups of $G$, implies that there must exist a maximal subgroup $M=M(W,U,u)$ of type $(2)$ which contains $g$ but not $x.$ In particular $x\notin W$ so $U\cong_H \langle x\rangle.$ Since $\langle x, y\rangle$ is an abelian group of order $p_i^2,$ we have $[y,u]=[y,x]=\{0\}.$ But then $y \in H^u$ and therefore $M$ contains $y$ and consequently also $x$, a contradiction.
\end{proof}

%\begin{lemma}Let $x\in F(G)$ and define$$I(x)=\{i\in \{1,\dots,r\}\mid p_i \text { divides } |x|\}, \quad J(x)=\{i \in I(x) \mid \delta_i>1\}.$$The following are equivalent:\begin{enumerate}	\item $\langle x\rangle$ is the intersection of maximal subgroups of $G;$	\item $\bigcap_{j\in J(x)}C_{\frat(H)}(V_j)=1.$\end{enumerate}\end{lemma}\begin{proof}	Since $\langle x \rangle$ is a normal subgroup of $G,$ $\langle x \rangle$	is intersection of maximal subgroups of $G$ if and only $\frat(G/\langle x\rangle)=1.$	Notice that $$G/\langle x\rangle \cong 	\left(\prod_{j\in J(x)}V_j^{\delta_j-1}\right)\rtimes H$$	and therefore 	$\frat(G/\langle x\rangle) \cong  \bigcap_{j\in J(x)}C_{\frat(H)}(V_j).$\end{proof}

\begin{lemma}\label{brutto}
	Let $x=yh\in G$ with $y\in F(G)$ and $h \in H.$ Let $\tilde J_G(x)$ be the set of the elements $i\in \{1,\dots,r\}$ such that $h$ centralizes $V_i$ and either $p_i$ is coprime with $|y|$ or $\delta_j>1.$
	%	 $$J_G(x)=\{i\in \{1,\dots,r\}\mid p_i \text { divides } |y|, [h,V_i]=\{0\} \text { and } \delta_i>1\}.$$
	Moreover let $M_H(h)$ be the intersection of the maximal subgroups of $H$ containing $h$ if $\langle h \rangle \neq H,$ $M_H(h)=H$ otherwise. The following are equivalent:
	\begin{enumerate}
		\item $\langle x\rangle$ is the intersection of maximal subgroups of $G;$
		\item 	 $\bigcap_{j\in \tilde J_G(x)}C_{M_H(h)}(V_j)=\langle h \rangle.$
	\end{enumerate}
\end{lemma}
\begin{proof}
	If $[h,V_i]\neq \{0\},$ then $vh$ is conjugate to $h$ for every $v\in  V_i,$ so, up to conjugation, we may assume $[y,h]=0.$  By Lemma \ref{coprimeh}, $\langle x\rangle=\langle y, h\rangle.$ Moreover, by Lemma \ref{coprime}, $\langle y\rangle$ is a direct sum of minimal normal subgroups of $G$ contained in $F(G)$ and this implies that, for every $z\in F(G),$ $\langle y\rangle H^z$ is the intersection of maximal subgroups of $G$. In particular if $\langle h\rangle = H$, then $\langle x\rangle=\langle y\rangle H$ is the intersection of maximal subgroups, so we may assume $\langle h\rangle \neq H.$
	Let $M=M(W,U,u)$ be a maximal subgroup of $G$ containing $h$. Since $y$ and $h$ commute and have coprime orders,
	$x\in M$ if and only if $y \in W$ and $h \in h^uW.$ The second condition is equivalent to $[h,u]\in U\cap W=\{0\}.$ Let $\Omega$ be the set of  $u\in F(G)$ such that $[u,h]=0$ and
	$\langle u \rangle$ is an $H$-submodule of $F(G)$ complementing a maximal $H$-submodule $W$ containing $\langle y\rangle.$ 
	Notice that $F(G)M_H(h)$ is the intersection of the maximal subgroups of type (1) of $G$ containing $x$. Consequently
	$$\langle y \rangle (M_H(h))^u= F(G)(M_H(h))^u\cap \langle y \rangle H^u$$  can be obtained as intersection of maximal subgroups of $G$ for every $u\in \Omega.$ It follows that the intersection of the maximal subgroups of $G$ containing $x$ coincides with $$X:=\bigcap_{u\in \Omega}\langle y\rangle (M_H(h))^u.$$
	Notice that if $u\in \Omega,$ then $\langle u\rangle \cong_H V_j$ for some $j\in \tilde J(x).$ Hence
	$$C:=\bigcap_{j\in \tilde J(x)}C_{M_H(h)}(V_j)=\bigcap_{u \in \Omega}C_{M_H(h)}(u).$$
	In particular if $c \in C,$ then $c\in X,$ and so (1) implies (2). Conversely assume $x\in X.$ Then $x=y^iz$ for some $i \in \mathbb Z$ and $z\in H.$ In particular,
	for every $u\in \Omega,$ $x\in \langle y\rangle (M_H(h))^u \cap \langle y\rangle M_H(h)$ and therefore $[z,u]\in \langle y\rangle \cap \langle u\rangle=\{0\}.$ So $z\in C$, hence (2) implies (1).
\end{proof}

\begin{proof}[Proof of Theorems \ref{supersu} and \ref{completo}]Theorems \ref{supersu} follows from Theorem \ref{solub} and Lemmas \ref{coprime} and \ref{coprimeh}. The proof of Theorem \ref{completo} follows from Lemma \ref{brutto} combined with the following remarks. Given $h\in H,$ $J_G(h)\subseteq \tilde J_G(fh)$ for every $f\in F(G).$ Moreover $J_G(h)=\tilde J_G((\prod_{j\in J_G(h)}u_j)h),$ where for every $j\in J_G(h)$ we choose  $u_i\in V_i.$ 
\end{proof}

Before using Theorem \ref{completo} to further investigate the properties of CIM-groups, let us show some examples that can make it clearer.
Let $H=	\langle x\rangle$ be a cyclic group of order 8
and consider the following $H$-module: $V_1$ has order 5 and $C_H(V_1)=\langle x^4\rangle,$ $V_2$ has order 17 and $C_H(V_1)=\{1\}.$ We consider three semidirect products constructed using $H$ and the modules $V_1$ and $V_2.$
\begin{enumerate}
	\item Let $G_1=(V_1^2\times V_2^2) \rtimes H.$ If $h$ has order 4, then $M_h(h)=\langle h\rangle$; if $h=x^4,$ then $J_G(h)=\{1\}$, $M_H(h)=\langle x^2\rangle$ and $C_{M_H(h)}(V_1)=\langle h\rangle;$ if $h=1,$ then $J_G(h)=\{1,2\}$, $M_H(h)=\langle x^2\rangle$ and $C_{M_H(h)}(V_1)\cap C_{M_H(h)}(V_2)=\langle 1\rangle.$
	Hence, by Theorem \ref{completo}, $G_1$ is a CIM-group.
	\item Let $G_2=(V_1^2\times V_2)\rtimes H.$ In this case
	$J_G(1)=\{1\}$, $M_H(1)=\langle x^2\rangle$ and $C_{M_H(1)}(V_1)=\langle x^4\rangle.$ By Lemma \ref{brutto} and its proof, if $g$ has order 17 or 85, then the intersection of the maximal subgroups of $G_2$ containing $|g|$ has order $2|g|.$
	\item Let $G_3=(V_1\times V_2^2)\rtimes H.$ In this case if $h=x^4,$ then
	$J_G(h)=\emptyset$ and $M_H(h)=\langle x^2\rangle$. %and $C_{K_1}(V_1)=\langle x^4\rangle.$ 
	If $g$ has order 10, then the intersection of the maximal subgroups of $G_3$ containing $|g|$ has order $20.$
\end{enumerate}

\begin{prop}\label{tcim}If $G$ is a finite CIM-group, then $G$ is a $T$-group.
\end{prop}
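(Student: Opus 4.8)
The plan is to read off the rigid structure of $G$ from Theorem \ref{supersu} and then verify, for the nilpotent residual $L:=G^{\mathfrak{N}}$, the hypotheses of Gasch\"utz's classical characterization of finite soluble $T$-groups: a finite soluble group is a $T$-group if and only if its nilpotent residual $L$ is an abelian Hall subgroup of odd order on which $G$ acts by power automorphisms, with $G/L$ a Dedekind group. Since $G$ is soluble by Theorem \ref{solub}, it suffices to check these three conditions.

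First I would fix the decomposition $G=(V_1^{\delta_1}\times\dots\times V_r^{\delta_r})\rtimes H$ given by Theorem \ref{supersu}, write $F:=F(G)=V_1^{\delta_1}\times\dots\times V_r^{\delta_r}$, and record that $H$ is abelian and acts on each $V_i$ (a one-dimensional $\mathbb{F}_{p_i}$-space) by scalars, via a homomorphism $H\to\mathbb{F}_{p_i}^\times$. Let $I=\{i : H \text{ acts nontrivially on } V_i\}$. Since an irreducible module is annihilated by $[\,\cdot\,,H]$ exactly when the action is trivial, I would identify $L=[F,H]=\prod_{i\in I}V_i^{\delta_i}$ and check that this equals $G^{\mathfrak{N}}$: indeed $G/[F,H]\cong\big(\prod_{i\notin I}V_i^{\delta_i}\big)\times H$ is abelian, hence nilpotent, so $G^{\mathfrak{N}}\le[F,H]$; conversely, in any nilpotent quotient the images of the coprime Hall subgroups $F$ and $H$ commute, giving $[F,H]\le G^{\mathfrak{N}}$.

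Next I would verify the three conditions. The subgroup $L$ is abelian and, by Lemma \ref{coprimeh}, coprime to $|H|$; combined with the distinctness of the $p_i$ (Lemma \ref{coprime}), this makes $L$ an abelian normal Hall subgroup. Moreover, for $i\in I$ one has $\aut(C_{p_i})\neq 1$, i.e. $p_i$ is odd, so $L$ has odd order. For the power-automorphism condition, any subgroup $W\le L$ splits as $W=\prod_{i\in I}(W\cap V_i^{\delta_i})$ by coprimality, and each factor, being an $\mathbb{F}_{p_i}$-subspace, is invariant under the scalar action of $H$ and is centralized by the abelian group $F$; hence $W\unlhd G$, which is precisely the statement that $G$ induces power automorphisms on $L$. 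Finally $G/L\cong\big(\prod_{i\notin I}V_i^{\delta_i}\big)\times H$ is abelian, and therefore Dedekind. Gasch\"utz's theorem then yields that $G$ is a $T$-group.

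The main obstacle is not any single computation but choosing the correct normal subgroup to feed into Gasch\"utz's criterion. One must use $L=G^{\mathfrak{N}}$ rather than the full Fitting subgroup $F(G)$, because $F(G)$ may carry a central factor $\prod_{i\notin I}V_i^{\delta_i}$ of even order (a factor $V_i\cong C_2$ is forced to be $H$-trivial, as $\aut(C_2)=1$), which would violate the odd-order hypothesis; passing to the nilpotent residual discards exactly this central part and restores odd order. A more hands-on alternative would be to prove transitivity of normality directly, reducing via $K\cap L\unlhd G$ to the case of a $\pi'$-subgroup $K$ with $K\cap L=1$ and $K\unlhd M\unlhd G$; there the difficulty concentrates in showing $[K,L]=1$, so that $L$, and hence all of $G$, normalizes $K$, and it is precisely this commutator estimate that Gasch\"utz's theorem packages for us.
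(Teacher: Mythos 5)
Your proof is correct, but it takes a genuinely different route from the paper. The paper argues directly and self-containedly: writing $W_i=V_i^{\delta_i}$, it observes that $[V_i,h]\neq\{0\}$ forces $h^{W_i}=W_ih$, deduces that a subgroup $X\leq G$ is normal if and only if $W_i\leq X$ for every $i$ with $[V_i,X]\neq\{0\}$, and then checks transitivity of normality by a two-line chase with this criterion ($X\unlhd Y\unlhd G$ and $[V_i,X]\neq\{0\}$ give $W_i\leq Y$, hence $W_ix=x^{W_i}\subseteq x^Y\subseteq X$). You instead reduce to Gasch\"utz's classical characterization of finite soluble $T$-groups via the nilpotent residual $L=G^{\mathfrak{N}}=[F(G),H]=\prod_{i\in I}V_i^{\delta_i}$, and all of your verifications are sound: the identification of $L$ (coprime Hall subgroups commute in nilpotent quotients), the Hall and odd-order conditions (from Lemmas \ref{coprime} and \ref{coprimeh}, plus $\aut(C_2)=1$), the power-automorphism condition (scalar action of $H$ on each homogeneous component $V_i^{\delta_i}$ makes every subspace invariant), and the Dedekind condition ($G/L$ abelian). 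What your approach buys is a conceptual placement of the result within the classical theory of soluble $T$-groups, and it correctly isolates why one must pass from $F(G)$ to the nilpotent residual (the possibly even central part of $F(G)$). What it costs is reliance on an external theorem not cited in the paper, whereas the paper's direct argument is elementary and, as a by-product, produces the explicit description of the normal subgroups of a CIM-group that is reused in the proof of Proposition \ref{ns}. Either proof is acceptable; yours is complete as written provided Gasch\"utz's theorem is properly referenced.
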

\begin{proof}By Theorem \ref{supersu}, $G=(V_1^{\delta_1}\times \dots \times V_r^{\delta_r})\rtimes H$ , with $H$ abelian and $|V_1|,\dots,|V_r|$ of prime order. For $1\leq i\leq r,$ let $W_i=V_i^{\delta_i}.$ If $h\in H$ and $[V_i,h]\neq \{0\},$ then
	$h^{W_i}=W_ih$ and from this it follows that a subgroup $X$ of $G$ is normal if and only if $W_i\leq X$ for every index $i$ such that $V_i$ is not centralized by $X.$ Now assume $X\unlhd Y \unlhd G.$ If $[V_i,X]\neq \{0\}$ then $[V_i,Y]\neq \{ 0\}$ and therefore $W_i\leq Y.$ But then, if $x\in X$ and $[V_i,x]\neq \{0\},$ then $W_ix=x^{W_i}\subseteq x^{Y_i}\subseteq X$. Thus $W_i\leq X$ whenever $[V_i,X]\neq \{0\}$ and therefore $X$ is normal in $G.$
\end{proof}

\begin{prop}\label{ns}
	Let $G$ be a finite CIM-group. If $N$ is a normal subgroup of $G$, then $N$ is a CIM-group.
\end{prop}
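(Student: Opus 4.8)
The plan is to exploit the explicit structure theorem (Theorem \ref{supersu}) together with the characterization of normal subgroups obtained inside the proof of Proposition \ref{tcim}. Write $G=(V_1^{\delta_1}\times\dots\times V_r^{\delta_r})\rtimes H$ with $H$ abelian, $W_i=V_i^{\delta_i}$, and recall from that proof that a subgroup $X\leq G$ is normal precisely when $W_i\leq X$ for every $i$ with $[V_i,X]\neq\{0\}$. First I would apply this to the given normal subgroup $N$: after reindexing, $N$ contains $W_i$ for $i$ in some set $I$ and centralizes $W_j$ for $j\notin I$. Since $N$ centralizes those $W_j$, the projection of $N$ into the complement is an abelian subgroup $H_0\leq H$ (using $[V_j,N]=\{0\}$ for $j\notin I$ and the fact that $N\cap F(G)=\prod_{i\in I}W_i$ because $F(G)$ is the Fitting subgroup and $N\cap F(G)$ is a normal, hence $H$-invariant, submodule), giving $N=\bigl(\prod_{i\in I}W_i\bigr)\rtimes H_0$. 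This already exhibits $N$ in the shape demanded by Theorem \ref{supersu}, with $H_0$ abelian and the same prime-order irreducible modules $V_i$, $i\in I$.

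Next I would verify that $N$ inherits the two arithmetic conditions of Theorem \ref{supersu}. The primes $|V_i|$ for $i\in I$ are a subfamily of the pairwise-distinct primes $p_1,\dots,p_r$, so they remain pairwise distinct. For the coprimality condition, note $|H_0|$ divides $|H|$, and by Lemma \ref{coprimeh} none of $p_1,\dots,p_r$ divides $|H|$; hence none divides $|H_0|$ either. Thus $N$ satisfies all the hypotheses of Theorem \ref{completo}, and it remains only to check the characterizing intersection condition for $N$.

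The main work, and the point I expect to be the genuine obstacle, is verifying the condition of Theorem \ref{completo} for $N$ by deducing it from the corresponding condition for $G$. Concretely, for each $h\in H_0$ with $\langle h\rangle\neq H_0$ I must show $\bigcap_{j\in J_N(h)}C_{M_{H_0}(h)}(V_j)=\langle h\rangle$, knowing the analogous equality holds in $G$ with $J_G(h)$, $M_H(h)$ in place of $J_N(h)$, $M_{H_0}(h)$. The delicate comparisons are: (a) $J_N(h)=J_G(h)\cap I$, since inside $N$ only the modules $W_i$ with $i\in I$ are present; and (b) relating $M_{H_0}(h)$, the intersection of maximal subgroups of $H_0$ through $h$, to $M_H(h)\cap H_0$. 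Because $H$ is abelian, its maximal subgroups have prime index and the intersection $M_H(h)$ is computed componentwise on the primary decomposition of $H$; I would argue that $M_{H_0}(h)\supseteq M_H(h)\cap H_0$, so that the $N$-side intersection dominates the $G$-side one and therefore already reduces to $\langle h\rangle$. The containment $\langle h\rangle\subseteq\bigcap_{j\in J_N(h)}C_{M_{H_0}(h)}(V_j)$ is automatic since $h$ centralizes each $V_j$ for $j\in J_N(h)$, so only the reverse containment needs the transfer from $G$. The careful bookkeeping to ensure that passing from $G$ to $N$ does not enlarge the centralizer intersection beyond $\langle h\rangle$ — i.e.\ that restricting the index set to $I$ and the abelian group to $H_0$ preserves the equality rather than merely an inclusion — is where I would concentrate the detailed argument.
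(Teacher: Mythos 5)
Your overall strategy (put $N$ into the shape of Theorem \ref{supersu} and then verify the criterion of Theorem \ref{completo} for $N$ by comparison with $G$) is the same as the paper's, but the decisive step is carried out in the wrong direction. To prove $\bigcap_{j\in J_N(h)}C_{M_{H_0}(h)}(V_j)=\langle h\rangle$ you need an \emph{upper} bound on this intersection, i.e.\ you need $M_{H_0}(h)$ to be small. The containment you propose, $M_{H_0}(h)\supseteq M_H(h)\cap H_0$, only makes the $N$-side intersection larger, and ``contains something equal to $\langle h\rangle$'' is exactly the trivial inclusion you already observed is automatic; nothing ``reduces to $\langle h\rangle$'' this way. (That containment is moreover false in general: for $H=C_4$, $H_0=\Phi(H)$ and $h=1$ one has $M_H(1)\cap H_0=H_0$ while $M_{H_0}(1)=1$.) The fact you actually need is the opposite one, $M_{H_0}(h)\leq M_H(h)$: since $H$ is abelian, a maximal subgroup $M$ of $H$ containing $h$ meets $H_0$ either in $H_0$ or in a maximal subgroup of $H_0$ containing $h$, so $M_{H_0}(h)\leq M$ in both cases. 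This yields $\bigcap_{j\in J_G(h)}C_{M_{H_0}(h)}(V_j)\leq\bigcap_{j\in J_G(h)}C_{M_H(h)}(V_j)=\langle h\rangle$.

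Even with that corrected, you must still pass from $J_G(h)$ to the smaller index set $J_N(h)$, and intersecting over fewer indices again only enlarges the result; you correctly flag this as the remaining obstacle but offer no mechanism to overcome it. The mechanism is precisely the normality of $N$: if $j\in J_G(h)\setminus J_N(h)$ then $W_j=V_j^{\delta_j}\not\leq N$, so by the normality criterion from the proof of Proposition \ref{tcim} one gets $[V_j,N]=\{0\}$, hence $[V_j,H_0]=\{0\}$ and $C_{M_{H_0}(h)}(V_j)=M_{H_0}(h)$; these indices impose no constraint at all, so $\bigcap_{j\in J_N(h)}C_{M_{H_0}(h)}(V_j)=\bigcap_{j\in J_G(h)}C_{M_{H_0}(h)}(V_j)$, which combined with the displayed inequality above finishes the proof. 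This is exactly how the paper argues. A smaller inaccuracy in your setup: $N\cap F(G)$ need not equal $\prod_{i\in I}W_i$, since in a CIM-group every subgroup of $F(G)$ is normal in $G$ and so $N\cap F(G)$ may also contain proper $H$-submodules of the $W_j$ with $j\notin I$; this does not derail the argument (such modules are centralized by $H_0$ and contribute nothing to the intersection), but the equality as you state it is wrong.
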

\begin{proof}
	Since $G$ is CIM, $G=(V_1^{\delta_1}\times \dots \times V_r^{\delta_r})\rtimes H$ as is the statement of Theorem \ref{supersu}. Let $N$ be a normal subgroup of $G$ and let $J$ be the set of the indices $j$ such that $[V_j,N]\neq \{0\}.$ Then $N=TK,$ where
	$K\leq H,$ $T\leq  V_1^{\delta_1}\times \dots \times V_r^{\delta_r}$ 
	and $V_j^{\delta_j}\leq T$ for every $j\in J.$ If $x \in K$ and $\langle x\rangle \neq K$ then
	$M_K(x)\leq M_H(x)$ and therefore
	$$\bigcap_{j\in J_N(x)}C_{M_K(x)}(V_j) = \bigcap_{j\in J_G(x)}C_{M_K(x)}(V_j) \leq \bigcap_{j\in J_G(x)}C_{M_H(x)}(V_j) 
	$$
	where the first equality follows from the fact that if $j\in J_G(x)\setminus J_N(x),$ then
	$V_j^{\delta_j} \not\leq N$ and therefore $[V_j,K]=\{0\}$ and $C_{M_K(x)}(V_j)=M_K(x).$ Since $G$ is a CIM-group, by Theorem \ref{completo} we have that $\bigcap_{j\in J_G(k)}C_{M_H(x)}(V_j)=\langle x\rangle.$ But then
	$\bigcap_{j\in J_N(x)}C_{M_K(x)}(V_j)=\langle x\rangle$ and so, again by Theorem \ref{completo}, $N$ is a CIM-group.
\end{proof}

\begin{proof}[Proof of Theorem \ref{qab}]
	Let $\Delta$ be the set of the pairs $(a,b)\in A^2,$ such that $A\neq \langle a\rangle$ and $b\notin \langle a\rangle$. For every $\delta=(a,b)\in \Delta$ there exists a subgroup $C$ of $A$ such that $a\in C, b\notin C$ and $G/C$ is cyclic. Let $\Omega_\delta$ be the set of primes $p$ such that $|G/C|$ divides $p-1.$ By  the Dirichlet prime number theorem, $\Omega_\delta$ is infinite. If $p\in \Omega_\delta$, then $G/C$ is isomorphic to a subgroup of the automorphism group of a cyclic group of order $p$ and therefore there exists an $A$-module $V(\delta,p)$ such that $|V(\delta,p)|=p$, $a\in C_A(V(\delta,p)),$ $b\notin C_A(V(\delta,p)).$ Since $\Delta$ is finite and $\Omega_\delta$ is infinite for every choices of $\delta$, we may find a set of primes $\{p_{\delta}\}_{\delta \in \Delta}$ with the property that $p_{\delta_1}\neq p_{\delta_2}$ whenever $\delta_1\neq \delta_2.$ It follows from Theorem \ref{completo} that
	$G:=\left(\prod_{\delta \in \Delta}V(\delta, p_\delta)^2\right)\rtimes A$ is a CIM-group.
\end{proof}

\begin{proof}[Proof of Theorem \ref{aim}] Clearly a finite IM-group is an AIM-group. Conversely, assume that $G$ is an AIM-group. Then in particular $G$ is a CIM-group, so by Theorems \ref{supersu} and \ref{completo}, $G=F(G)\rtimes A$, where $A$ is abelian, $F(G)$ is elementary abelian and all the subgroups of $F(G)$ are normal in $G$. Since $F(G)$ is the intersection of maximal subgroups of $G$, the Frattini subgroup of $A$ must be trivial, which is equivalent to say that $A$ is elementary abelian. But then $G$ is an IM-group by the characterization of IM-groups mentioned in the introduction.
\end{proof}

\begin{proof}[Proof of Theorem \ref{solouno}]Let $H$ be a cyclic group of order 4 and let $V_1$ and $V_2$ be two irreducible $H$-modules $V_1$ and $V_2$ with $|V_1|=5,$
	$|V_2|=13$ and $C_H(V_1)=C_H(V_2)=\{1\}.$ Consider the semidirect product $G:=(V_1^{\delta_1}\times V_2^{\delta_2})\rtimes H,$ with $\delta_1,\delta_2\geq 2.$ Let $F=V_1^{\delta_1}\times V_2^{\delta_2}$ and let $K$ be a proper subgroup of $G$. Consider $N=K\cap F.$ Then $N\cong V_1^{\eta_1}\times V_2^{\eta_2}$ with $\eta_1$ and $\eta_2$ non-negative integer and $K/N$ is a cyclic subgroup of order dividing 4 of the factor group $G/N \cong (V_1^{\delta_1-\eta_1}\times V_2^{\delta_2-\eta_2})\rtimes H.$ It follows from Lemma \ref{brutto} that $K/N$ is the intersection of maximal subgroups except when $|K/N|=1$ and $\delta_1-\eta_1=\delta_2-\eta_2=0.$ This is equivalent to say that $F$ is the unique proper subgroup of $G$ which is not the intersection of maximal subgroups.
\end{proof}

\section{Sylow subgroups as intersection of maximal subgroups}\label{ultimo}

In this section we construct a finite group $G$ which is not soluble but all whose Sylow subgroups are the intersection of maximal subgroups.

Let $S=\alt(5)$ be the alternating group of degree 5. Let $a=(1,2)(3,4)$ and $b=(1,3,5).$ Then $S$ has two faithful irreducible representations, of degree, respectively, 3 and 5, over the field of order 11, corresponding to the homomorphisms $\phi_1: S\to \gl(3,11)$ and $\phi_2: S\to \gl(5,11)$ defined by setting:
$$\phi_1(a)=\begin{pmatrix}-1&0&0\\0&-1&0\\4&4&1\end{pmatrix},\quad \phi_1(b)=\begin{pmatrix}0&1&0\\0&0&1\\1&0&0\end{pmatrix},$$
$$\phi_2(a)=\begin{pmatrix}1&0&0&0&0\\0&0&1&0&0\\0&1&0&0&0\\0&0&0&1&0\\-1&-1&-1&-1&-1\end{pmatrix}, \quad \phi_2(b)=\begin{pmatrix}0&1&0&0&0\\0&0&0&1&0\\0&0&0&0&1\\1&0&0&0&0\\-1&-1&-1&-1&-1\end{pmatrix}.$$
In correspondence we can find two irriducible $S$-modules, of order, respectively, $11^3$ and $11^5.$
Let $G=(V_1\times V_2)\rtimes S.$

The normal subgroup $V_1\times V_2$ is the unique Sylow 11-subgroup of $G$ and it is the intersection of maximal subgroups of $G$ since the Frattini subgroup of $G/(V_1\times V_2)\cong S$ is trivial.

A Sylow 3-subgroup of $G$ is conjugate  to the subgroup $\langle (1,2,3) \rangle$ of $S$. Notice that $\langle (1,2,3)\rangle = M_1\cap M_2$ with $M_1, M_2$ two maximal subgroups of $S$ isomorphic to $\alt(4).$ It follows that  $\langle (1,2,3) \rangle = V_1S \cap V_2S \cap (V_1\times V_2)M_1\cap (V_1\times V_2)M_2.$

A Sylow 5-subgroup of $G$ is conjugate  to the subgroup $\langle (1,2,3,4,5) \rangle$ of $S$. There exists a unique maximal subgroup $M$ of $S$ containing $(1,2,3,4,5)$. This subgroup has order 10 and there exists $v \in V_2$ such that $C_M(v)=\langle (1,2,3,4,5) \rangle$. Then
$\langle (1,2,3,4,5) \rangle = V_1S \cap V_2S \cap V_1M\cap V_1M^v.$

A Sylow 2-subgroup of $G$ is conjugate  to the subgroup $\langle (1,2)(3,4), (1,3)(2,4) \rangle$ of $S$. There exists a unique maximal subgroup $M$ of $S$ containing $(1,2)(3,4)$ and $(1,3)(2,4).$ It has order 12 and there exists $w \in V_2$ such that $C_M(w)=\langle  (1,2)(3,4), (1,3)(2,4) \rangle$. Then
$\langle (1,2)(3,4), (1,3)(2,4) \rangle = V_1S \cap V_2S \cap V_2M\cap V_2M^w.$

\end{document}